\def\ft{\@ifnextchar[{\ft@s}{\ft@}}
\def\ft@{\ft@@@s[\f@size]}
\def\ft@s[{\@ifnextchar{a}{\ft@sz[}{\ft@@s[}}
\def\ft@@s[{\@ifnextchar{s}{\ft@sz[}{\ft@@@s[}}
\def\ft@@@s[#1]{\ft@sz[at #1pt]}
\def\ft@sz[#1]#2{\font\fonttemp=#2 #1\fonttemp\ignorespaces}
\def\ns{\fontshape{\shapedefault}\selectfont}
\let\mycal\cal
\def\cal#1{{\mycal #1}}
\let\texbf\bf
\def\bf{\texbf\boldmath}
\def\@@bold{bold}
\def\widebar{\ifx\math@version\@@bold
  \let\@widebar\@@@widebar\else\let\@widebar\@@widebar\fi\@widebar}
\def\@@widebar#1{\text{\setbox15\hbox{$#1$}%
  \dimen15 0.45\wd15\advance\dimen15 0.15\ht15%
  \dimen16\ht15\advance\dimen16 0.00em\advance\dimen16 0.3ex%
  \dimen17 0.65\wd15\advance\dimen17 0.05\ht15\advance\dimen17 0.1ex%
  \dimen18 0.035em\advance\dimen18 0.00ex
  \put[\dimen15,\dimen16][c]{\vrule depth 0pt height \dimen18 width \dimen17}}#1}
\def\@@@widebar#1{\text{\setbox15\hbox{$#1$}%
  \dimen15 0.45\wd15\advance\dimen15 0.15\ht15%
  \dimen16\ht15\advance\dimen16 0.00em\advance\dimen16 0.26ex%
  \dimen17 0.65\wd15\advance\dimen17 0.05\ht15\advance\dimen17 0.1ex%
  \dimen18 0.05em\advance\dimen18 0.00ex
  \put[\dimen15,\dimen16][c]{\vrule depth 0pt height \dimen18 width \dimen17}}#1}
\def\put{\@ifnextchar[{\@put}{\@@rput[\z@,\z@][r]}}
\def\@put[#1]{\@ifnextchar[{\@@put[#1]}{\@@@@@put[#1]}}
\def\@@put[#1][{\@ifnextchar{l}{\@@lput[#1][}{\@@@put[#1][}}
\def\@@@put[#1][{\@ifnextchar{c}{\@@cput[#1][}{\@@@@put[#1][}}
\def\@@@@put[#1][{\@ifnextchar{r}{\@@rput[#1][}{\relax}}
\def\@@@@@put[{\@ifnextchar{l}{\@@lput[\z@,\z@][}{\@@@@@@put[}}
\def\@@@@@@put[{\@ifnextchar{c}{\@@cput[\z@,\z@][}{\@@@@@@@put[}}
\def\@@@@@@@put[{\@ifnextchar{r}{\@@rput[\z@,\z@][}{\@@@@@@@@put[}}
\def\@@@@@@@@put[#1]{\@@rput[#1][r]}
\let\hm@d@\leavevmode
\long\def\@@lput[#1,#2][l]#3{\setbox0\hbox{#3}\hm@d@\raise#2\hbox to\z@{\dimen0 #1%
  \advance\dimen0-\wd0\kern\dimen0\dp0\z@\ht0\z@\wd0\z@\box0\hss}\ignorespaces}
\long\def\@@cput[#1,#2][c]#3{\setbox0\hbox{#3}\hm@d@\raise#2\hbox to\z@{\dimen0 #1%
  \advance\dimen0-.5\wd0\kern\dimen0\dp0\z@\ht0\z@\wd0\z@\box0\hss}\ignorespaces}
\long\def\@@rput[#1,#2][r]#3{\setbox0\hbox{\kern#1\raise#2\hbox{#3}}%
  \dp0\z@\ht0\z@\wd0\z@\hm@d@\box0\ignorespaces}
\def\flbox{\@ifnextchar[{\@flbox}{\@@rflbox[\z@,\z@][r]}}
\def\@flbox[#1]{\@ifnextchar[{\@@flbox[#1]}{\@@@@@flbox[#1]}}
\def\@@flbox[#1][{\@ifnextchar{l}{\@@lflbox[#1][}{\@@@flbox[#1][}}
\def\@@@flbox[#1][{\@ifnextchar{c}{\@@cflbox[#1][}{\@@@@flbox[#1][}}
\def\@@@@flbox[#1][{\@ifnextchar{r}{\@@rflbox[#1][}{\relax}}
\def\@@@@@flbox[{\@ifnextchar{l}{\@@lflbox[\z@,\z@][}{\@@@@@@flbox[}}
\def\@@@@@@flbox[{\@ifnextchar{c}{\@@cflbox[\z@,\z@][}{\@@@@@@@flbox[}}
\def\@@@@@@@flbox[{\@ifnextchar{r}{\@@rflbox[\z@,\z@][}{\@@@@@@@@flbox[}}
\def\@@@@@@@@flbox[#1]{\@@rflbox[#1][r]}
\long\def\@@lflbox[#1,#2][l]#3{\@@lput[#1,#2][l]{%
  \vtop{\leftskip\z@\parindent\z@\raggedleft\hm@d@#3}}}
\long\def\@@cflbox[#1,#2][c]#3{\@@cput[#1,#2][c]{%
  \vtop{\leftskip\z@\parindent\z@\raggedcenter\hm@d@#3}}}
\long\def\@@rflbox[#1,#2][r]#3{\@@rput[#1,#2][r]{%
  \vtop{\leftskip\z@\parindent\z@\raggedright\hm@d@#3}}}
\def\({\gdef\nextsp{}\mbox\bgroup{\ns(}\sl\aux}
\def\aux#1{\def\tempx{#1}\let\next\aux%
   \if\tempx)\let\next\egroup{\/\nextsp\ns)}%
   \else\if\tempx f{\nextsp f\gdef\nextsp{\kern0.2ex}}%
   \else\if\tempx i{\nextsp\kern0.1ex i\gdef\nextsp{\kern0.1ex}}%
   \else\if\tempx j{\nextsp\kern0.2ex j\gdef\nextsp{\kern0.1ex}}%
   \else\if\tempx l{\nextsp\kern0.1ex l\gdef\nextsp{\kern0.2ex}}%
   \else\if\tempx I{\nextsp I\gdef\nextsp{\kern0.2ex}}%
   \else\if\tempx'{\/$\mkern0.5mu'$\gdef\nextsp{}}%
   \else\if\tempx-{\/\ns-\gdef\nextsp{}}%
   \else\nextsp\tempx\gdef\nextsp{}%
   \fi\fi\fi\fi\fi\fi\fi\fi\next}
\newcommand{\fig}[2]{\includegraphics[scale=0.83]{Figures/#2}}
\newtheorem{theorem}{Theorem}[section]
\newtheorem{lemma}[theorem]{Lemma}
\newtheorem{proposition}[theorem]{Proposition}
\theoremstyle{definition} \newtheorem{definition}[theorem]{Definition}
\def\varemptyset{%
   {\text{\raise.21ex\hbox{$\not$}}\mkern.15mu\mathrm{O}\mkern.15mu}}
\def\widebar#1{\text{\setbox15\hbox{$#1$}%
  \dimen15 0.45\wd15\advance\dimen15 0.15\ht15%
  \dimen16\ht15\advance\dimen16 0.00em\advance\dimen16 0.3ex%
  \dimen17 0.65\wd15\advance\dimen17 0.05\ht15\advance\dimen17 0.1ex%
  \dimen18 0.03em\advance\dimen18 0.005em
  \put[\dimen15,\dimen16][c]{\vrule depth 0pt height \dimen18 width \dimen17}}#1}
\def\_{{\hbox to 1.2ex{\hss\vrule width1ex height0pt depth.4pt\hss}}}
\let\varemptyset\varemptyset
\let\bar\widebar
\let\tilde\tilde
\let\tilde\widetilde
\newcommand\red%
\newcommand{\id}{\mathop{\mathrm{id}}\nolimits}
\newcommand{\one}{{\mathbb{1}}}
\newcommand{\diam}{\mathrel{\diamond}}
\newcommand{\C}{{\cal C}}
\renewcommand{\H}{{\cal H}}
\newcommand{\Alg}{\cal Alg}
\newcommand{\alg}{\overline{\cal Alg}}
\def\bPhi{\mkern-1mu\bar{\mkern1mu \Phi\mkern-2mu}\mkern2mu}
\def\up{\@ifnextchar[{\@up}{\mathop{\uparrow}\nolimits}}
\def\@up[#1]{{\uparrow}\text{\raise .6ex\hbox{$_#1$}}}
\def\down{\@ifnextchar[{\@down}{\mathop{\downarrow}\nolimits}}
\def\@down[#1]{{\downarrow}\text{\raise .6ex\hbox{$_#1$}}}
\newcommand{\bs}{\text{\raise.4ex\hbox{\bf$\scriptscriptstyle\backslash$}}}
\newcommand{\beqn}{\begin{eqnarray*}}
\newcommand{\eeqn}{\end{eqnarray*}}
\newcommand{\be}{\begin{equation}}
\newcommand{\ee}{\end{equation}}
\newcommand{\ba}{\begin{array}}
\newcommand{\ea}{\end{array}}
\newcommand{\Chb}{{\cal C\mkern-1.8mu h\mkern-0.3mu b}}
\newcommand{\Cob}{{\cal C\mkern-1.1mu o\mkern-0.2mu b}}
\newcommand{\Cobt}{\tilde{\vrule height1.4ex width0pt \smash{\Cob}}{}}
\newcommand{\Hbb}{{\mkern1mu\bar{\bar{\mkern-1mu{\H}}}}{}}
\newcommand{\bbPhi}{\mkern-1.27mu\bar{\mkern1.27mu{\bPhi}\mkern-2.03mu}\mkern2.03mu}
\newcommand{\xdownarrow}[1]{%
  {\left\downarrow\vbox to #1{}\right.\kern-\nulldelimiterspace}
}
\newcommand{\xuparrow}[1]{%
  {\left\uparrow\vbox to #1{}\right.\kern-\nulldelimiterspace}
}
\begin{document}
\title{On the algebraic characterization of the category of 3-dimensional cobordisms}

\author{Ivelina Bobtcheva}

\maketitle

\begin{abstract}
It is proved in \cite{BP} that  the category of relative 3-dimensional cobordisms $\Cob^{2+1}$  is equivalent to the universal algebraic category $\Hbb^r$ generated by a Hopf algebra object. A different algebraic category $\alg$ generated by a Hopf algebra object is defined in \cite{AS} and it conjectured to be equivalent to $\Cob^{2+1}$ as well. We  prove that there exists a functor $\alg\to \Hbb^r$, and use it  to present an alternative set of axioms for $\Hbb^r$. 
\end{abstract}


\section{Introduction}

The development of quantum topology builds a bridge between low dimensional topology and the theory of Hopf algebras. The initial steps of the theory consisted in constructing invariants of 3-manifolds and 3-dimensional TQFT's, starting with the category of representations of some quantum groups \cite{RT91}. Then Hennings \cite{H96}, Kaufmann and Radford \cite{KR95} build similar invariants  directly from the Hopf algebra itself. Eventually, the works of Crane and Yetter \cite{CY99}, Habiro \cite{Ha00}, Kerler and Lyubashenko \cite{KL01} lead to the understanding that the  Hopf algebra structure is intrinsic to the category of relative 3-dimensional cobordisms $\Cob^{2+1}$. In particular, in \cite{Ke02}, Kerler  defined a universal monoidal braided category $\Alg$, freely generated by a Hopf algebra object and a full (surjective) functor from $\Alg$ to the category of relative 3-dimensional cobordisms\footnote{Actually, Kerler defines a functor from $\Alg$ to the category $\Cobt^{2+1}$ of 2-framed relative 3-dimensional cobordims. The difference between the two categories is that $\Cob^{2+1}$ is the quotient of $\Cobt^{2+1}$ by one more normalization relation (relation \(n) in Table \ref{table-Hrb/fig}). We  work here with $\Cob^{2+1}$ instead of $\Cobt^{2+1}$, because this is the framework in \cite{AS}. Observe that relation (25) of the algebra defined in \cite{AS} doesn't hold in $\Cobt^{2+1}$.}. He also posed the problem  \cite[Problem 8-16 (1)]{Oh02} to find a set of additional relations for $\Alg$, such that the above functor induces a category equivalence on the quotient of $\Alg$ by the new relations. 

\medskip
This problem was solved in \cite{BP}, where it was proved that in order to obtain a category equivalence, two more relations should be added to the defining axioms of $\Alg$ (see \(r8) and \(r9) in Table \ref{table-Hr-axioms/fig}). The first of these relations  can be thought as compatibility condition between the ribbon morphism and  the comultiplication, while second one can be thought as compatibility condition between the copairing  and the braiding morphism.  

In \cite{BP} the algebraic structure of $\Cob^{2+1}$ is derived from the one of the category of relative cobordisms of  4-dimensional 2-handlebodies $\Chb^{3+1}$. Indeed, any 3-dimensional cobordism can be seen as the  boundary of a 4-dimensional handlebody and this allows to define  the category  $\Cob^{2+1}$  as quotient of  $\Chb^{3+1}$, modulo all transformations which change the interior of the handlebody but leave invariant its boundary.  
Then two universal monoidal braided categories freely generated by a braided ribbon Hopf algebra object are introduced: $\H^r$ and its quotient $\Hbb^r$, and it is shown that we have the following commutative diagram: 
$$
\begin{array}{ccc}
\Chb^{3+1} &\longrightarrow &\Cob^{2+1}\\
\Phi\xuparrow{15 pt} && \bbPhi\xuparrow{15 pt} \\
\H^r &\longrightarrow & \Hbb^r 
\end{array},
$$
where the horizontal arrows represent quotient functors, while $\Phi$ and $\bbPhi$ are equivalences of categories. 

On the other hand,  in \cite{AS} the universal algebraic category $\alg$ is defined and it is announced an unpublished result of Habiro  \cite[Theorem 2.3]{AS}, according to which there is a category equivalence between the category of 3-dimensional cobordisms and $\alg$. 
The main difference in the elementary morphisms of $\alg$ and $\Hbb^r$ is that  while  both $\Alg$ and $\Hbb^r$  introduce the two sided integrals as elementary morphisms, this is not done in $\alg$. Moreover, in $\alg$ four new relations, regarding the ribbon structure, are added to the ones of Kerler's category $\Alg$ ((see ${\mathtt h10-13}$) in Table \ref{table-Alg-axioms/fig})  and  they are all different from relations \(r8-9), \(d) and \(n) in the definition of $\Hbb^r$.  

\smallskip
The goal of these notes is to summarize the results in \cite{BP}, related to the algebraic structures of the cobordism categories,  in order to facilitate their use and  to investigate the independence of the axioms of $\H^r$ and $\Hbb^r$ and  the relationship between the categories $\Hbb^r$ and $\alg$. 

In particular, we  show that in $\H^r$ the integral axioms and axioms \(r8) and \(r9)   are independent from the rest of axioms of the category (Proposition \ref{indep-axioms/prop}). Then we prove that there exists a functor $\Gamma: \alg\to \Hbb^r$  (Theorem \ref{functor-Hab/thm}). Finally, we present an alternative solution of the problem of Kerler by proving that in $\Hbb^r$ axiom \(r9)  is equivalent to $\Gamma({\mathtt h10})$, while axiom \(r8) of $\Hbb^r$ is equivalent to  the requirement that the adjoint morphism intertwines with the copairing (Proposition \ref{new-axiomsHbb/thm}). The  equivalence of $\Gamma({\mathtt h10})$ and \(r9) has been communicated to the author  by Mariya Stamatova. 

\smallskip
The author is grateful to Mariya Stamatova for posing some questions, which became the main motivation behind writing down the present notes, and  to Anna Beliakova and Marco De Renzi for their interest and encouragement and for pointing out some misprints and oversights in its previous version. The study of the relationship between the categories $\Hbb^r$ and $\alg$  has been  completed in \cite{BBDP}  where it is proven that the two categories are actually equivalent. 


\newpage
\section{Preliminaries}

In what follows we are going to use the terminology and the notations in \cite{BP}, where complete definitions, proofs and references can be found. The morphisms in the topological categories $\Chb^{3+1}$ and $\Cob^{2+1}$ will be described in terms of Kirby tangles, while the morphisms in the algebraic categories $\H^r$ and $\Hbb^r$ will be described in term of plane diagrams, where each diagram is a compositions of products of elementary ones and identities modulo some defining relations to which we refer as {\it axioms}. The composition of two diagrams $D_2\circ D_1$ is obtained by stacking $D_2$ on the top of $D_1$, while the product $D_1 \diam D_2$ is given by the horizontal juxtaposition of $D_1$ and $D_2$.

\subsection{The universal ribbon Hopf algebra $\H^r$}
\label{defn-Hr/sec}

\begin{definition}\label{cat-hopf-algebra/def}
Given a braided monoidal category $\C$ with product $\diam$, unit object $\one$ and braiding morphism $\gamma$, a \textit{braided Hopf algebra} in $\C$ is a objects $H$ in $\C$, equipped with the
following morphisms in $\C\,$:

\smallskip\noindent
a \textit{comultiplication} $\Delta : H \to H \diam H$,
such that:
\vskip-4pt
$$ (\Delta\diam\id_H) \circ \Delta = (\id_H\diam\Delta) \circ \Delta;
\eqno{\(a1)}$$
\vskip4pt

\smallskip\noindent
a \textit{counit} $\epsilon : H \to \one$, such that:
\vskip-4pt
$$(\epsilon\diam\id_H)\circ\Delta = \id_H = (\id_H\diam\epsilon)\circ\Delta;
\eqno{\(a2-2')}$$
\vskip4pt

\smallskip\noindent
a \textit{multiplication} $m : H \diam H$, such 
that:
\vskip-4pt
$$m \circ (m \diam \id_H) = m \circ (\id_H \diam m),
\eqno{\(a3)}$$
$$(m \diam m) \circ (\id_H \diam \gamma\diam \id_H) \circ
(\Delta \diam \Delta) = \Delta \circ m,
\eqno{\(a5)}$$
$$\epsilon \circ m = \epsilon \diam \epsilon;                  
\eqno{\(a6)}$$
\vskip4pt

\smallskip\noindent
a \textit{unit} $\eta: \one \to H$, such
that:
\vskip-4pt
$$m\circ(\id_H\diam\eta) = \id_H = m\circ(\eta\diam\id_H),
\eqno{\(a4-4')}$$
$$\Delta \circ \eta = \eta \diam \eta,
\eqno{\(a7)}$$
$$\epsilon \circ \eta = \id_{\one};
\eqno{\(a8)}$$
\vskip4pt

\smallskip\noindent
an \textit{antipode} $S: H \to H$ and its inverse $\bar 
S: H \to H$, such that:
\vskip-12pt
$$m\circ(S\diam\id_H)\circ\Delta = \eta\circ\epsilon,
\eqno{\(s1)}$$
$$m\circ(\id_H\diam S)\circ\Delta = \eta \circ \epsilon,
\eqno{\(s1')}$$
$$S\circ\bar S =\bar S\circ S=\id_H;
\eqno{\(s2-2')}$$
\vskip4pt

A \textit{braided unimodular Hopf-algebra} in $\C$ is a Hopf algebra $H$ in $\C$, equipped with the
following morphisms in $\C\,$:

\smallskip\noindent
an \textit{integral} $L: \one \to H$ and a \textit{cointegral} $l: H \to \one$, such that:
$$
 (\id_H \diam l) \circ \Delta = \eta \circ l
 \eqno{\(i1)}$$
 $$
 m \circ (L \diam \id_H) = L \circ \epsilon, \eqno{\(i2)}$$
$$
l\circ L=\id_{\one}, \eqno{\(i3)}$$
$$
 l\circ S=l,  \eqno{\(i5)}$$
 $$
S\circ L=L;  \eqno{\(i4)}$$

\smallskip\noindent
A \textit{braided ribbon Hopf-algebra} in $\C$ is a unimodular Hopf algebra $H$ in $\C$, equipped an invertible \textit{ribbon morphism}  $v : H\to H$ and a \textit{copairing} $\sigma : \one \to H \diam 
H$, such that:
\vskip-8pt
$$S \circ v = v \circ S\,,
\eqno{\(r3)}$$
$$\epsilon \circ v = \epsilon\,,
\eqno{\(r4)}$$
$$m \circ (v \diam \id_H) = v \circ m\,,
\eqno{\(r5)}$$
$$\sigma = (v^{-1} \diam (v^{-1} \circ S)) \circ \Delta \circ v \circ \eta,
\eqno{\(r6)}$$
$$(\Delta \diam \id_H) \circ \sigma = (\id_H \diam \id_H \diam 
m) \circ (\id_H \diam \sigma \diam \id_H) \circ \sigma,
\eqno{\(r7)}$$
$$\Delta \circ v^{-1} =(v^{-1} \diam
v^{-1}) \circ \mu \circ  \bar \gamma\circ\Delta,
\eqno{\(r8)}$$
$$
(m \diam m) \circ  (S \diam (\mu \circ \bar\gamma \circ \mu) \diam S) \circ 
(\rho_l \diam \rho_r) = \gamma
\eqno{\(r9)},$$
where $\mu = (m \!\diam m)
\circ (\id_H \diam \sigma \diam \id_H) : H \diam H \to H \diam H$ and $\rho_l = (\id_H \diam m) \circ (\sigma \diam \id_H): H \to H \diam H$ (resp. $\rho_r = (m \diam \id_H) \circ 
 (\id_H \diam \sigma): H \to H \diam H$)  defines a left (resp. right) $H$-comodule structure on $H$.
\end{definition}

\begin{definition}\label{defnHr/def}
The \textit{universal braided ribbon Hopf algebra} $\H^r$ is the braided strict monoidal category freely generated by a ribbon Hopf algebra $H$. 
\end{definition}

In terms of graph diagrams, the elementary morphisms and defining relations of the universal ribbon Hopf algebra $\H^r$ are presented in Table \ref{table-Hr-axioms/fig} (cf. Tables 4.7.12 and 4.7.13 in \cite{BP}), while in Tables \ref{table-Hu-prop/fig} and \ref{table-Hr-prop/fig} we list some important relations which are satisfied in $H^r$. The reader can find their proofs in \cite[Propositions 4.1.4, 4.1.5, 4.1.6, 4.1.9, 4.1.10, 4.2.5, 4.2.7, 4.2.11, 4.2.13]{BP}. 

The existence of a non degenerate form $\lambda$ and coform $\Lambda$, given by  \(f1-2)  in Table \ref{table-Hu-prop/fig}, is well known consequence of the integral axioms \(i1-3)\footnote{In \cite{BD21} the form and the coform are called {\it evaluation} and {\it coevaluation} morphisms.}. This allows to change a graph diagram by isotopy moves which preserve the orientation of its vertices (see \(f3-11) and the duality of the univalent vertices in Table \ref{table-Hu-prop/fig}). Similarly to what is done in \cite{BP}, the use of the isotopy moves in the diagrammatic proofs in the following section  will be implicit.

\begin{definition}
A braided monoidal category $(\C, \diam,\one, \gamma)$ is called symmetric if $\gamma\circ \gamma =\id_{H\diam H}$ and a Hopf algebra $H$ in a symmetric category is called {\it braided cocomutative} if $\gamma\circ\Delta=\Delta$.
\end{definition}

\begin{proposition}\label{Htr/prop}
Any braided cocomutative  Hopf algebra $H$ in a symmetric monoidal category $(\C, \diam,\one, \gamma)$ satisfies the ribbon axioms \(r1-9) with ribbon morphism $v=\id_H$ and trivial copairing $\sigma=\eta\diam\eta$. In particular, any unimodular braided cocomutative  Hopf algebra is a ribbon Hopf algebra with ribbon morphism $v=\id_H$.
\end{proposition}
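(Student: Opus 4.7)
The plan is to verify all of the ribbon axioms \(r1-9) by direct substitution of $v = \id_H$ (so that $v^{-1} = \id_H$ as well) and $\sigma = \eta \diam \eta$. The axioms that involve only $v$ — explicitly \(r3), \(r4), \(r5), and presumably also \(r1), \(r2) in the full list — become tautologies immediately, since every compatibility of $v$ with the Hopf structure reduces to the compatibility of $\id_H$.

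The main preliminary observation is that the choice $\sigma = \eta \diam \eta$ collapses the auxiliary morphisms built from $\sigma$. By the unit axioms \(a4-4'),
\[
\mu = (m \diam m) \circ (\id \diam \eta \diam \eta \diam \id) = \id_{H \diam H},
\]
and similarly $\rho_l = \eta \diam \id_H$ and $\rho_r = \id_H \diam \eta$ (after identifying $\one \diam H \cong H \cong H \diam \one$). For \(r6) I would apply \(a7) to write $\Delta \circ \eta = \eta \diam \eta$, together with the standard identity $S \circ \eta = \eta$ (a consequence of \(s1), \(a7), \(a8), and the unit axioms); the right-hand side of \(r6) then collapses to $(\id \diam S)\circ(\eta \diam \eta) = \eta \diam \eta = \sigma$. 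Axiom \(r7) reduces on both sides to $\eta \diam \eta \diam \eta$ by the unit axioms.

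The slightly more delicate cases are \(r8) and \(r9), in which $\bar\gamma$ appears. Here the symmetry of $\C$ gives $\bar\gamma = \gamma$, and triangularity gives $\gamma \circ \Delta = \Delta$. Axiom \(r8) then becomes $\Delta = \mu \circ \bar\gamma \circ \Delta = \gamma \circ \Delta = \Delta$. For \(r9), substituting $\mu = \id_{H \diam H}$, the computed values of $\rho_l, \rho_r$, and $S \circ \eta = \eta$, one finds that $\rho_l \diam \rho_r$ inserts units in the first and fourth tensor positions, the middle factor $S \diam (\mu \circ \bar\gamma \circ \mu) \diam S$ reduces to $\id \diam \gamma \diam \id$, and the outer $m \diam m$ absorbs the units via \(a4-4'); the composite is precisely $\gamma$, as required.

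The final assertion of the proposition is then immediate: any unimodular triangular Hopf algebra $H$, equipped with $v = \id_H$ and $\sigma = \eta \diam \eta$, satisfies the full list of ribbon axioms. I anticipate no substantive obstacle; the only point requiring some care is the bookkeeping of the groupings in the four-fold tensor product that appears in \(r9).
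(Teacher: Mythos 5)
Your proposal is correct and follows essentially the same route as the paper's proof: direct substitution of $v=\id_H$ and $\sigma=\eta\diam\eta$, after which \(r1-7) trivialize via the unit/counit axioms and $S\circ\eta=\eta$, while \(r8) and \(r9) reduce precisely to the triangularity condition $\gamma\circ\Delta=\Delta$ and the symmetry condition $\bar\gamma=\gamma$ (i.e.\ $\gamma\circ\gamma=\id$). The paper's proof is just a more compressed version of the same computation.
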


\begin{proof}
By substituting $v$ with $\id_H$ and $\sigma$ with $\eta\diam\eta$  in  the ribbon axioms \(r1-9) in Table \ref{table-Hr-axioms/fig}, we see that \(r1-6) are trivially satisfied, while by applying axioms \(a4) and \(a7) both sides of \(r7)  reduce to $\eta\diam\eta\diam \eta$ and by applying axioms \(a4-4')  and property \(s6) in Table  \ref{table-Hu-prop/fig}, \(r8) and \(r9) reduce to the defining relations of a braided cocomutative  Hopf algebra: $\gamma\circ\Delta=\Delta$ and $\gamma\circ \gamma =\id_{H\diam H}$. 
\end{proof}

Any classical (non categorical) Hopf algebra over a field $k$ is a Hopf algebra in the (symmetric) tensor category of $k$-modules where the braiding morphism is simply the transposition $\tau$. Such Hopf algebra is called cocommutative if $\tau\circ \Delta =\Delta$. Therefore, Proposition \ref{Htr/prop} implies that any unimodular cocommutative Hopf algebra is a braided cocomutative  ribbon Hopf algebra in the category of $k$-modules. Examples of such algebras are the group algebras $k[G]$, where $G$ is a finite group.

Examples of non braided cocomutative  ribbon Hopf algebras in (non symmetric) braided categories can be obtained from classical non cocommutative unimodular ribbon Hopf algebras. The most famous family of  such algebras come from quantum groups (see for example Chapter 36 of \cite{L93}) and  have been used to define quantum invariants of 3-dimensional manifolds and of 4-dimensional handlebodies (cf. \cite{RT91}, \cite{H96}, \cite{KR95} and \cite{BM02}). Given such algebra,  one can define an appropriate braiding on the category of  its two-sided modules and modify its coproduct in a way to obtain a ribbon Hopf algebra in this last category. 

\smallskip
Now we can address the problem of the independence of the set axioms of $H^r$. 

\begin{proposition}\label{indep-axioms/prop}
(a) The integral axioms \(i1-4) are independent from the rest of the axioms of $\H^r$;

(b) The ribbon axioms $(r8-9)$ are independent from the rest of the axioms  of $\H^r$ .
\end{proposition}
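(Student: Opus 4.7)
Both parts are proved by exhibiting explicit models---Hopf algebras in braided monoidal categories---that satisfy all axioms of $\H^r$ except the ones in question; such a model forces a functor out of the free category (with the offending axioms removed) that cannot factor through $\H^r$, which is precisely the statement of independence.

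For part (a), Proposition \ref{Htr/prop} reduces the task to exhibiting a classical cocommutative Hopf algebra without an integral, since any such algebra automatically satisfies the entire ribbon list \(r1-9) with the canonical choice $v = \id_H$ and $\sigma = \eta \diam \eta$. The polynomial Hopf algebra $H = k[x]$ with $x$ primitive works: if $l : k[x] \to k$ were a cointegral, then applying \(i1) to $x^n$ and expanding $\Delta(x^n)$ via the binomial formula would force $l(x^m) = 0$ for all $m \geq 0$, contradicting \(i3). Equivalently, one can use the universal enveloping algebra $U(\mathfrak{g})$ of any nontrivial Lie algebra, where the same infinite-dimensionality obstructs the existence of an integral.

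For part (b), the approach is to construct a ribbon Hopf algebra with integrals for which axiom \(r8) alone fails. Starting from a classical unimodular Hopf algebra $H$, one keeps the integral structure and replaces $v = \id_H$ by left multiplication by an invertible central element $\tilde v \in H$ satisfying $\epsilon(\tilde v) = 1$ and $S(\tilde v) = \tilde v$; axioms \(r3-5) are then automatic and $\sigma$ is forced by \(r6). The main obstacle is that in the cocommutative symmetric setting---the simplest natural source of models---a direct computation in the character basis for $H = k[G]$ with $G$ finite abelian shows that the constraint on the eigenvalues of $\tilde v$ imposed by \(r7) already implies the constraint imposed by \(r8), so the two axioms cannot be decoupled there; moreover \(r9) only yields the squared version of the \(r8)-constraint, so the gap is visible only at the level of signs. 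Separating \(r7) from \(r8) therefore requires a genuinely non-symmetric braiding together with a non-cocommutative coalgebra structure, so that the two axioms interact with $\gamma$ in independent ways. A natural candidate is a small quantum group at a root of unity (or Sweedler's four-dimensional Hopf algebra as a minimally quasi-triangular example) equipped with a modified ribbon element designed to preserve \(r7), \(r9), \(i1-5) while violating \(r8); completing the verification amounts to an explicit $R$-matrix computation. Alternatively, one can use a topological model drawn from the equivalence $\Phi : \H^r \to \Chb^{3+1}$ of \cite{BP}, by identifying the Kirby move that realizes \(r8) and quotienting $\Chb^{3+1}$ by all the remaining moves only.
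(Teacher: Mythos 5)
Part (a) of your proposal is correct and is essentially the paper's argument: the paper uses the very same example $H_1=k[x]$ with $x$ primitive, reduced via Proposition \ref{Htr/prop} to the triangular case, and justifies the non-existence of the integral by citing Sweedler's theorem that a Hopf algebra over a field has an integral only if it is finite dimensional; your direct computation with $\Delta(x^n)$ is an acceptable substitute for that citation.

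Part (b) has a genuine gap: you never actually exhibit a model in which \(r8) fails while all the other axioms hold. The quantum-group construction is deferred to ``an explicit $R$-matrix computation'' that is not carried out, and the alternative topological route (quotienting $\Chb^{3+1}$ by all moves except the one realizing \(r8)) is circular --- showing that the remaining moves do not already imply the \(r8)-move is precisely the independence statement you are trying to prove. Moreover, your analysis dismisses the symmetric setting too quickly: you only examine \emph{cocommutative} algebras there, find that \(r8) cannot be violated, and conclude that a non-symmetric braiding plus a modified ribbon element are needed. The case you skipped is exactly where the paper's model lives: take any classical \emph{non-cocommutative} unimodular finite-dimensional Hopf algebra (e.g.\ a quantum group algebra at a root of unity), regarded as a Hopf algebra in the symmetric tensor category of $k$-modules with the transposition $\tau$ as braiding, and set $v=\id_H$, so that \(r6) forces $\sigma=\eta\diam\eta$. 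The integral axioms hold by finite dimensionality and unimodularity, \(r1-7) are trivially satisfied, and by the computation in Proposition \ref{Htr/prop} the two remaining axioms degenerate to two \emph{different} conditions: \(r9) becomes $\gamma\circ\gamma=\id$, which holds since $\gamma=\tau$, while \(r8) becomes $\gamma\circ\Delta=\Delta$, i.e.\ cocommutativity, which fails by hypothesis. So no non-symmetric braiding and no modification of the ribbon element are needed; the whole point is that with trivial ribbon data \(r8) and \(r9) separate already over the category of $k$-modules.
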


\begin{proof}
(a) follows from a theorem of   Sweedler \cite{S69}, according to which a Hopf algebra over a field $k$ has an integral if and only if it is finite dimensional. Therefore, it is enough to find a  categorical infinite dimensional Hopf algebra which satisfies the ribbon axioms. One example\footnote{One can take as well  the universal enveloping algebra of any Lie algebra.}  is the  Hopf algebra $H_1=k[x]$ with $\Delta(x)=x\otimes 1+1\otimes x$, $\epsilon(x)=0$ and $S(x)=-x$.  $H_1$ is an infinite dimensional  braided cocomutative  Hopf algebra in the symmetric category of  $k$-modules and according to Proposition \ref{Htr/prop}, it satisfies  the ribbon axioms \(r1-9) with  $v=\id_H$. 

To show (b), suppose that $(\C,\, \gamma, \,H,\, v, \,\sigma)$ is ribbon Hopf algebra with ribbon morphism $v$ and copairing $\sigma$ in a non symmetric braided category $\C$; in particular $\gamma_{H,H}\circ\gamma_{H,H}\neq \id_{H\diam H}$. Consider now $(\C,\, \gamma, \,H,\, v'=\id_H, \,\sigma=\eta\diam\eta)$, i.e. the same Hopf algebra in the same category, but with a different (trivial) choice of ribbon morphism and copairing. All axioms are still satisfied with the exception of \(r8-9), since the choice of the ribbon morphism is not compatible any more with the braiding and the comultiplication. 

Therefore, in order to prove that axioms \(r8-9) are independent from the rest of the axioms, it is enough to present an example of a non symmetric braided category and a unimodular ribbon Hopf algebra in it which satisfies all axioms in Definition \ref{cat-hopf-algebra/def}. This is done  in  \cite{BD21}, where it is shown that the transmutation of the small quantum group $u_qsl_2$ is a unimodular ribbon Hopf algebra  in the category $u_qsl_2$-mod  of finite-dimensional left $u_qsl_2$-modules and this last category is non symmetric.
\end{proof}

\medskip\smallskip
\noindent{\bf Remarks}
\begin{enumerate}
\item Relation \(r6) in Definition \ref{cat-hopf-algebra/def} can actually be seen as the definition of $\sigma$ in terms of the ribbon morphism $v$ and this is how it is presented in Table \ref{table-Hr-axioms/fig}. Nevertheless,  the set of axioms can be changed by removing \(r6) and introducing property \(p2) in Table \ref{table-Hr-prop/fig} as an axiom; then \(r6) will follow from \(s3), \(p2) and \(r8).

\item Our definition of categorical ribbon Hopf algebra includes the condition that the algebra is unimodular (axioms \(i1-3)). This is not a standard choice of terminology and it was done in an attempt to find a single name which illustrates the most important characteristic of the algebra. 

\item As it was seen in Proposition \ref{Htr/prop}, the axioms of $\H^r$ are compatible with trivial copairing and therefore do not imply that the copairing $\sigma$ is non degenerate. This is the main difference between $\H^r$ and the category $\Hbb^r$, where the non degeneracy of $\sigma$ will be imposed (see Section \ref{Hbb/sec}).

\item The result in  \cite{BD21}, used in the proof of Proposition \ref{indep-axioms/prop} (b), is actually more general. In  \cite{BD21} it shown that if $H$ is any unimodular ribbon (non categorical) Hopf algebra, its transmutation, as defined by Majid in \cite{Ma91}, is a unimodular ribbon Hopf algebra  in the category  of finite-dimensional left $H$-modules in the sense of Definition  \ref{cat-hopf-algebra/def}. In particular, the transmutation of the small quantum group $u_q \mathfrak{g}$ for any simple complex Lie algebra $\mathfrak{g}$ is an example such algebra.

\begin{table}[htb]
\centering \fig{}{table-Hr-axioms.eps}
\caption{}
\label{table-Hr-axioms/fig}
\end{table}

\begin{table}[htb]
\centering \fig{}{table-Hu-prop.eps}
\caption{}
\label{table-Hu-prop/fig}
\end{table}

\begin{table}[htb]
\centering \fig{}{table-Hr-prop.eps}
\caption{}
\label{table-Hr-prop/fig}
\vskip-3pt
\end{table}

\clearpage

 \end{enumerate}


\subsection{The category  $\Chb^{3+1}$ }\label{Cob4/sec}

We start by presenting a brief review of  the definition of the category of relative 4-dimensional 2-handlebody cobordisms $\Chb^{3+1}$ with a single 0-handle, and its description in terms of Kirby tangles. Full details  and references to the broad literature on this argument   \cite{Ki89, GS99, Ke99, KL01} can be found in  Chapter 2 of \cite{BP}.

\medskip
The set of objects in $\Chb^{3+1}$ is $\{M_n,\, \iota_n\}_{n\in \mathbf N}$, where $M_n$ is  a standard 3-dimensional relative 1-handlebody with a single 0-handle and $n$ 1-handles, and $\iota_n: D^2\to \partial H^0$ is an embedding of the 2-dimensional disk in the boundary of the 0-handle. A morphism $W: M_n\to M_m$ in $\Chb^{3+1}$ is a relative 4-dimensional 2-handlebody build on  the connected 3-dimensional 1-handlebody $X(M_n, M_m)=(M_n\sqcup M_m)\cup_{\iota_n\sqcup\iota_m}\,D^1\times D^2$, obtained by attaching a single 1-handle  between the 0-handles of $M_n$ and $M_m$ (see Figure \ref{cobordism01/fig} for an example, where all the horizontal segments represent copies of $B^2$). Such 4-dimensional 2-handlebody is considered up to {\it 2-equivalence}, i.e. up to changing  the attaching maps of its 1- and 2-handles by isotopy and creation/cancellation of 1/2 handles pairs.

 \begin{figure}[h]
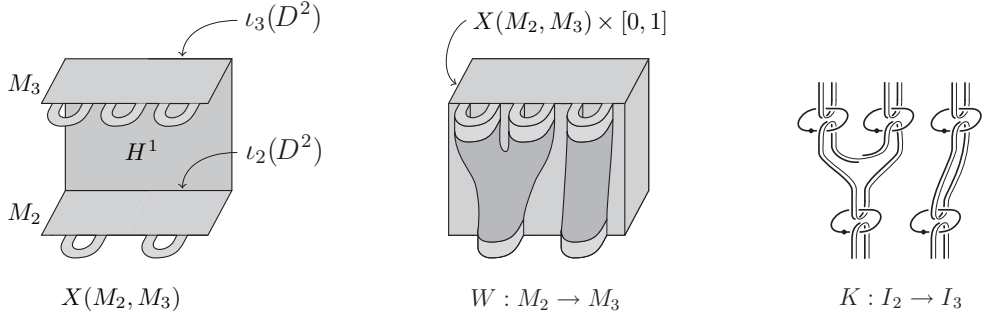

\centering \fig{}{cobordism01.eps}
\caption{Example of a morphisms in $\Chb^{3+1}$ and its representation with a Kirby tangle.}
\label{cobordism01/fig}
\vskip-6pt
\end{figure}

\medskip
The morphisms of $\Chb^{3+1}$ are represented diagrammatically by admissible  Kirby tangles and this allows  to identify $\Chb^{3+1}$  with the category of such tangles \cite[Proposition 2.3]{BP}.

\smallskip
In particular, the category of admissible Kirby tangles has as set of objects sequences  $I_n = $ $((a'_{1},a''_{1}), \dots, (a'_{n},a''_{n}))$ of pairs of intervals in $E = [0,1]^2$ associated to each $M_n,\,n\in \mathbf N$. Then an \textit{admissible Kirby tangle} $K: I_n\to I_m$ is a tangle with two types of components:
\begin{itemize}
\item[-]
Dotted unknots spanning disjoint flat disks in $\text{Int}\, E \times \left]0,1\right[$. Such unknots represent the 1-handles of the 4-dimensional 2-handlebody whose attaching regions (disjoint pairs of 3-balls)  are thought to be squeezed onto the spanning disks. Observe that the spanning discs won't be explicitly drawn in the plane diagrams but will be assumed to project bijectively onto l'interior of the unknots.
\item[-]
 Framed curves  regularly embedded in $\text{Int}\, E \times \left[0,1\right]$ and transversal to the spanning disks of the unknots; such curves represent the attaching maps of the 2-handles and will be drawn  as ribbons in which  the base curve is thicker then the parallel framing curve. Each open curve  joins a pair of intervals $(a'_{k} \times \{0\},a''_{k} \times \{0\})$ for some $(a'_{k},a''_{k}) \in I_n$ or $(a'_{k} \times \{1\},a''_{k} \times\{1\})$ for some $(a'_{k},a''_{k}) \in I_m$ and the base curve always ends in the left end-points of the intervals.
\end{itemize}

 The {\it composition} of two tangles $K_1: I_{n} \to I_{m}$ and $K_2: I_{m} \to I_{p}$  is given by translating $K_2$ on the top of $K_1$, glueing the two tangles along $I_{m}$ and then rescaling.
 Two  2-handlebodies  are \textit{2-equivalent} if and only if the corresponding Kirby tangles are related by isotopy and the moves of Figure \ref{kirby-tang-moves/fig}.
\begin{figure}[htb]
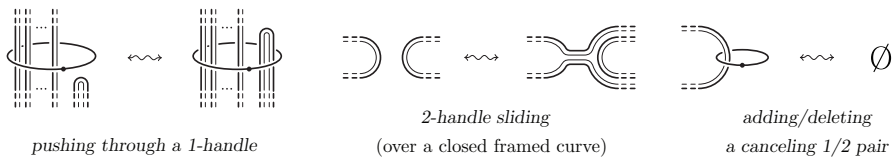

\centering \fig{}{kirby-tang-moves.eps}
\caption{Equivalence moves for  Kirby tangles}
\label{kirby-tang-moves/fig}
\vskip-6pt
\end{figure}

The category of admissible Kirby tangles has a strict monoidal structure, whose product, once again denoted by $\diam$ , is given by $I_n\diam I_m=I_{n+m}$ on the objects, while on the morphisms $K\diam K'$ is obtained by translating $K'$ in the space on the right of $K$ and rescaling. The unit of the product is the empty tangle.

\smallskip
The following is Theorem 4.7.5 in \cite{BP}.

\begin{theorem}{\rm\bf \cite{BP}} The universal algebraic category $\H^r $  is equivalent to the category of relative cobordisms of 4-dimensional 2-handlebodies $\Chb^{3+1}$. The equivalence functor $\Phi :\H^r\to \Chb^{3+1}$ sends the elementary morphisms,  the form, the coform and the copairing of $\H^r$ to the corresponding Kirby tangles in Figure \ref{phi/fig}.
\end{theorem}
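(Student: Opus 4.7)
The plan is to establish the equivalence by constructing $\Phi$ on generators as prescribed by Figure \ref{phi/fig}, verifying it descends to a functor from $\H^r$, and then constructing a quasi-inverse $\Psi: \Chb^{3+1} \to \H^r$. I would proceed in three stages.

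First, I define $\Phi$ on objects by $\Phi(H^{\diam n}) = M_n$ and on the elementary generators $m,\Delta,\eta,\epsilon,S,\bar S,L,l,v,v^{-1},\sigma$ together with the braiding by reading off Figure \ref{phi/fig}. To show $\Phi$ is well-defined, I must check that each defining relation of $\H^r$---the Hopf algebra axioms \(a1-8), antipode axioms \(s1-2'), integral axioms \(i1-5), and ribbon axioms \(r1-9)---translates into a 2-equivalence of admissible Kirby tangles. The bialgebra axioms are essentially planar isotopies of framed components; the antipode relations \(s1-1') produce a pair of 2-handles that cancels against a dotted 1-handle via the cancelling-pair move of Figure \ref{kirby-tang-moves/fig}. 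The integral axioms \(i1-3) realize the birth of a cancelling 1/2-pair absorbing a closed framed loop, while the ribbon axioms \(r1-9) encode how framing half-twists interact with handle slides and with the linkage of framed strands around dotted unknots.

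Second, essential surjectivity is immediate since every object of $\Chb^{3+1}$ is of the form $M_n = \Phi(H^{\diam n})$. For fullness I would slice an arbitrary admissible Kirby tangle $K:I_n\to I_m$ by a generic Morse-type height function, arranging that between consecutive levels only a single elementary event occurs: a local cup or cap of a framed ribbon, a single crossing, a local ribbon twist, an endpoint landing on the boundary intervals, a local cap of a dotted unknot, or a framed strand threading a dotted unknot. Each such event corresponds, up to horizontal product with identity strands, to one of the elementary generators: attaching endpoints yield $m,\Delta,\eta,\epsilon$; framed cups and caps yield the form $\lambda$ and coform $\Lambda$ (expressible in $\H^r$ via \(f1-2) of Table \ref{table-Hu-prop/fig}); dotted cups and caps yield $L$ and $l$; and threading a strand through a dotted unknot produces the copairing $\sigma$. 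Reassembling the slices by vertical composition and horizontal juxtaposition presents $K$ as $\Phi$ of a morphism of $\H^r$.

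The main obstacle is faithfulness, which I would attack by showing the slicing construction above descends to a well-defined quasi-inverse $\Psi:\Chb^{3+1}\to\H^r$. Independence from the chosen height function reduces to local moves interchanging two distant events or sliding a local extremum past a crossing; these are consequences of \(a1-8) together with the isotopy moves \(f3-11) which, by non-degeneracy of $\lambda$ and $\Lambda$, are built into the diagrammatic calculus. The genuinely hard part is verifying that the Kirby moves of Figure \ref{kirby-tang-moves/fig}---the 2-handle slide over another 2-handle, the sliding of a 2-handle over a 1-handle, and the birth/death of a cancelling 1/2-pair---become algebraic identities in $\H^r$. The 2-handle slide is governed by the comodule relation \(r7) together with coassociativity and the antipode; the 1/2-pair cancellation consumes a closed framed loop via the integral axioms \(i1-5); and the compatibility of these slides with the ribbon framing is precisely what \(r8) and \(r9) are designed to ensure. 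Once $\Psi$ is well-defined, the identities $\Psi\circ\Phi=\id$ and $\Phi\circ\Psi=\id$ follow by matching generators to elementary tangles and vice versa, completing the equivalence.
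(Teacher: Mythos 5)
You should first note that the paper does not actually prove this statement: it is quoted as Theorem 4.7.5 of \cite{BP}, so there is no internal proof to compare yours against. Judged on its own terms, your outline does follow the strategy used in \cite{BP}: define $\Phi$ on generators by the tangles of Figure \ref{phi/fig}, check that each axiom of $\H^r$ becomes a 2-deformation of admissible Kirby tangles, prove essential surjectivity and fullness by slicing a tangle into elementary pieces, and obtain faithfulness from a quasi-inverse $\Psi$. The easy halves of this are correctly identified; for instance the integral and unit/counit axioms do reduce to cancellation of 1/2-handle pairs, as Remark 1 after the theorem records.

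The genuine gap is that the two steps carrying all the weight are asserted rather than argued. Well-definedness of $\Psi$ is not only a matter of interchanging distant events or sliding an extremum past a crossing: one must exhibit a \emph{complete} set of local moves relating any two slicings of the same tangle, and then verify every generating move of 2-equivalence (isotopy of attaching maps, 1/2-handle cancellation, 2-handle slides over 1- and 2-handles) against the axioms. Saying that the 2-handle slide is ``governed by \(r7) together with coassociativity'' and that compatibility with the framing ``is precisely what \(r8) and \(r9) are designed to ensure'' restates what must be proved; the completeness of the axiom list --- that no further relations are needed --- is the entire content of the theorem, and in \cite{BP} it occupies several chapters and proceeds through intermediate categories of standardized tangles. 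Moreover, the algebraic counterpart of pushing framed strands through a dotted unknot is the adjoint morphism of Section \ref{adjoint/sec} and its intertwining property \(q14), a key ingredient your sketch omits. In short, your proposal is a correct roadmap consistent with the cited proof, but it is not itself a proof.
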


\begin{figure}[htb]
\centering \fig{}{phi.eps}
\caption{$\Phi\!:\H^r \to \Chb^{3+1}$}
\label{phi/fig}
\end{figure}

\medskip
{\bf Remarks}

\begin{enumerate}
\item
As it is seen in Figure \ref{phi/fig}, the images under $\Phi$  of the cointegral $\lambda$ and of  the unit $\eta$ have the same topological structure in $\Chb^{3+1}$ with the source and the target exchanged. The same is true for the images of the integral  $\Lambda$  and of the counit $\epsilon$. Moreover, in $\Chb^{3+1}$ both the integral axioms \(i1-3) and the defining axioms of the unit and the counit \(a2-2') and \(a4-4') reduce to a cancellation of 1/2 handle pairs. Nevertheless, as we saw in Proposition \ref{indep-axioms/prop} (a), on algebra level the integral axioms do not follow from the rest of the axioms.

\item \label{nondegeneracy/rem}
One can define a pairing $\bar\sigma =(\lambda\diam\lambda)\circ (\id_H\diam \sigma\diam S): \H^r\diamond \H^r\to \one$ (Figure \ref{defn-bar-sigma/fig} (a)), but as it is shown in Figure \ref{defn-bar-sigma/fig} (b),    $\Phi((\bar\sigma\diamond\id_H)\circ(\id_H\diamond\sigma))\neq\id_H$. Actually,  substituting $\Phi((\bar\sigma\diamond\id_H)\circ(\id_H\diamond\sigma))$ with the identity would correspond to substituting an undotted component with a dotted one (1/2-handle trading). Such move would change the interior of the corresponding 4-dimensional handlebody, leaving unchanged its boundary. Therefore, the non degeneracy of $\sigma$ doesn't hold in $\H^r$, but should be imposed  in the definition of the boundary category $\Hbb^r$,  equivalent to the category of 3-dimensional cobordisms $\Cob^{2+1}$.

\begin{figure}[htb]
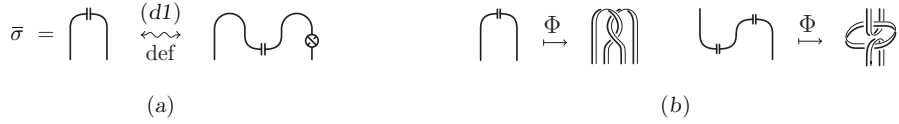

\centering \fig{}{defn-bar-sigma.eps}
\caption{Definition of $\overline{\sigma}$ and its image under $\Phi$.}
\label{defn-bar-sigma/fig}
\vskip-6pt
\end{figure}

\item Observing the images of the ribbon element and the copairing under the equivalence functor $\Phi$ in Figure \ref{phi/fig}, we deduce that invariants coming from braided cocomutative  Hopf  algebras, where the ribbon morphism and the copairing are trivial, can not detect changes in the framing or in the linking of the attaching maps of the two handles. It can easily be seen that such invariants will depend only on the 2-dimensional spine of the handlebody up to  2-deformations. For example, group algebras of finite groups will provide  well-known homotopy invariants of 4-dimensional 2-handlebodies.

\end{enumerate}

\subsection{Adjoint morphisms in $\H^r$}
\label{adjoint/sec}
 
The adjoint morphisms $\alpha_n: H\diam H^{\diam n}\to H^{\diam n}$ have been used in Chapter 4.4 of \cite{BP} to give an alternative formulation of the ribbon axioms \(r8) and \(r9).   We present here the definition and list the basic properties of such morphisms.
 
 \begin{definition}\label{alpha/def}
Define the (left) adjoint morphism $\alpha_n: H\diam H^{\diam n}\to H^{\diam n}$ inductively by the following identities (see Table \ref{table-adjointdefn/fig}):
\vskip-6pt
$$\arraycolsep0pt
\alpha_1 =  m \circ (m \diam S) \circ  (\id_H \diam \gamma_{H,H}) \circ (\Delta \diam \id_H) 
 \eqno{\(q1)}
$$
$$
\alpha_{n+1} = (\alpha_n \diam 
 \alpha_1) \circ (\id_{H} \diam \gamma_{H, H^{\diam n}} \diam 
 \id_{H}) \circ (\Delta\diam \id_{H^{\diam (n+1)}}).
 \eqno{\(q2)}
$$
\end{definition}

\begin{table}[hbt]
\centering \fig{}{table-adjointdefn.eps}
\caption{}
\label{table-adjointdefn/fig}
\end{table}

In Table \ref{table-adjointdefn/fig}  are presented the properties of the adjoint morphisms which are relevant to the present work (see  Tables 4.4.1 and 4.4.5 in \cite{BP} for the complete list). Observe that, analogously to the case of classical Hopf algebras, properties \(q3-6) imply that the adjoint morphism defines a left action of $H$ on itself and intertwines with the multiplication and the unit morphisms.  The proofs of those relations are straightforward and do not make use of the ribbon axioms. Moreover, the ribbon axioms \(r8) and \(r9) can be reformulated in terms of the adjoint morphisms in the form \(q7') and \(q8) presented in Table  \ref{table-adjointdefn/fig}  \cite[Proposition 4.4.5]{BP}. 

The last property \(q14) in Table \ref{table-adjointdefn/fig} states that not just the multiplication and the unit, but any  morphism  $F: H^{\diam n}\to H^{\diam m},\, n, m \geq 0$ in $H^r$ intertwines with the adjoint morphism \cite[Lemma 4.4.10]{BP}:
$$
F\circ\alpha_n=\alpha_m\circ(\id_H\diam F) \eqno{\(q14)}
$$
 This is highly non trivial fact.  In the classical case of  symmetric tensor category, the adjoint action is known to intertwine with the comultiplication and the antipode only when the Hopf algebra is cocommutative \cite[Lemma 5.7.2]{M93}. A ribbon Hopf algebra with non-trivial braiding is not cocommutative and if the proof of Proposition 4.4.5 in \cite{BP} works, it is only because the ribbon axioms \(r8) and \(r9) are the right ones to make the comultiplication, the antipode and the braiding morphisms intertwine with the adjoint action. A legitimate question is if, modulo the rest of the axioms of $\H^r$, the ribbon axioms \(r8) and \(r9) are equivalent to the condition that the comultiplication, the antipode and the braiding intertwine with the adjoint morphism. In the case of the quotient category $\Hbb^r$, Proposition \ref{new-axiomsHbb/thm} bellow makes a step in this direction.

The topological meaning of the intertwining property \(q14) can be understood by looking at the image of the adjoint morphism under the functor $\Phi:H^r\to Cob^{3+1}$. Such image consists in a single dotted component which embraces some open framed components (Figure \ref{adjoint-phi/fig}). In other words, the adjoint morphism represents an 1-handle along which pass the attaching maps of some 2-handles. Therefore the intertwining of a given morphism with the adjoint one is the algebraic analog of pushing part of the Kirby diagram through an 1-handle. 

 \begin{figure}[h]
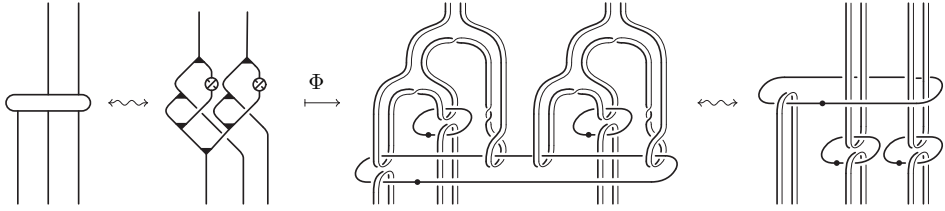

\centering \fig{}{adjoint-phi.eps}
\caption{The image of the adjoint morphism under $\Phi$.}
\label{adjoint-phi/fig}
\vskip-6pt
\end{figure}

\subsection{The universal boundary ribbon Hopf algebra $\overline{\overline{\H}}$$^r$}\label{Hbb/sec}

The category of  3-dimensional relative cobordisms $\Cob^{2+1}$ can be seen as the boundary of $\Cob^{3+1}$ and, following \cite{KL01}, we define it as the quotient category of $\Chb^{3+1}$ modulo the well known Kirby moves \cite{Ki78} relating any
two 4-dimensional 2-handlebodies with diffeomorphic boundaries.  In particular, the objects of  $\Cob^{2+1}$  are those of $\Chb^{3+1}$, while the morphisms of $\Cob^{2+1}$ are equivalence classes of morphisms of $\Chb^{3+1}$ under the relations generated by 1/2-handle trading and blowing down/up (Figure \ref{boundary01/fig}).

\begin{figure}[htb]
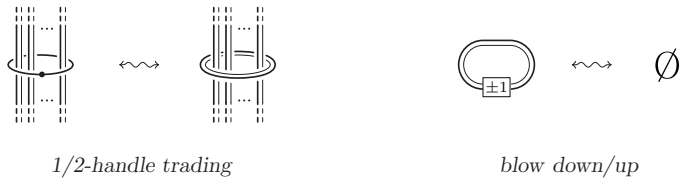

\centering \fig{}{boundary01-r.eps}
\caption{Kirby calculus moves}
\label{boundary01/fig}
\end{figure}

As it was observed in Remark 2 on p. \pageref{nondegeneracy/rem}, in $\H^r$ the invariance under 1/2-handle trading corresponds to the non-degeneracy of the copairing. Moreover, in order to obtain the such nondegeneracy, it is enough to require that the  integral is dual to the cointegral with respect to the copairing \cite[Proposition 5.4.2]{BP}. 
This motivates the following definition.

\begin{definition} The \textit{universal selfdual ribbon Hopf algebra} $\Hbb^r$ is the
quotient category of $\H^r$ modulo the relations (see Table \ref{table-Hrb/fig}):
 $$
 (\lambda\diam \id_H)\circ\sigma=\Lambda,\,\eqno{\(d)}
 $$
 $$
 \lambda\circ v\circ\eta=\one,\,\eqno{\(n)}
 $$
\end{definition}

The axioms and the properties of $\Hbb^r$ are presented in Table \ref{table-Hrb/fig}. In particular, properties \(d2-2') and \(d3-3') in Table \ref{table-Hrb/fig} assert  the existence of a non degenerate pairing $\bar\sigma$ which defines an isomorphism between the algebra and its dual. 

\begin{table}[htb]
\centering \fig{}{table-Hrb.eps}
\caption{}
\label{table-Hrb/fig}
\end{table}

The following theorem   \cite[Theorem 5.5.4]{BP} solves the problem posed by Kerler.

\begin{theorem} {\rm \bf\cite{BP}} \label{alg3cob/thm} The functor $\Phi$ induces an equivalence of categories $\bbPhi:\Hbb^{r} \to \Cob^{2+1}$. 
\end{theorem}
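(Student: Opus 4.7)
The strategy is to exploit that $\Phi:\H^r\to \Chb^{3+1}$ is already an equivalence of categories, and to invoke the universal property of quotient categories. Since $\Hbb^r$ is obtained from $\H^r$ by imposing the relations \(d) and \(n), and $\Cob^{2+1}$ is obtained from $\Chb^{3+1}$ by imposing 1/2-handle trading and blow-down/up, it suffices to verify two things: (i) $\Phi$ sends \(d) and \(n) to identities that hold in $\Cob^{2+1}$, so that $\bbPhi$ is well-defined; and (ii) every Kirby move lifts to an algebraic identity provable in $\Hbb^r$, which yields faithfulness.

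First I would establish (i) by inspecting Figure \ref{phi/fig}: the image of \(d) translates into a Kirby-tangle identity relating a dotted component paired with a $0$-framed meridian carrying a cointegral to the image of $\Lambda$, and the two differ by exactly a 1/2-handle trading move; the image of \(n) translates into a $\pm 1$-framed unknot with a ribbon twist whose erasure is a blow-down. Both moves are generators of the Kirby calculus, so both sides agree in $\Cob^{2+1}$. Essential surjectivity and fullness of $\bbPhi$ then follow immediately from those of $\Phi$ together with the fact that the quotient functors on both sides are identities on objects and surjective on morphisms.

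The substantive step is (ii), faithfulness. Given $D_1,D_2$ in $\H^r$ with $\bbPhi([D_1])=\bbPhi([D_2])$, by the Kirby theorem $\Phi(D_1)$ and $\Phi(D_2)$ are connected in $\Chb^{3+1}$ by a finite sequence of elementary Kirby moves, and it is enough to realize each such move algebraically in $\Hbb^r$. The key algebraic content of \(d) is that it promotes $\bar\sigma$ to a nondegenerate pairing (properties \(d2-2') and \(d3-3') in Table \ref{table-Hrb/fig}), yielding $(\bar\sigma\diam\id)\circ(\id\diam\sigma)=\id$, which is precisely the algebraic shadow of 1/2-handle trading; by the computation in Remark \ref{nondegeneracy/rem}, the only obstruction to this identity holding in $\H^r$ is exactly the relation \(d), so imposing it implements the move. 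Relation \(n) combined with \(r6) and the integral axioms furnishes the analogous algebraic form of the blow-down, accounting for the framing correction produced by the ribbon twist.

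The main obstacle is combinatorial: Kirby moves are local topological operations that may occur inside arbitrarily complicated tangles, so their algebraic translation requires sliding arbitrary portions of the diagram across the 1-handle being traded. This is exactly the content of the intertwining property \(q14) of the adjoint morphism, which ensures that any morphism of $H^r$ commutes past the adjoint action and hence past the dotted component topologically representing it; in particular the strong ribbon axioms \(r8) and \(r9), needed to establish \(q14), are indispensable here. Once \(q14) is in hand, the lifting of each Kirby move to $\Hbb^r$ reduces to a systematic diagrammatic verification of the type carried out in detail in \cite{BP}, and faithfulness follows.
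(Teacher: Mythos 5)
The paper itself offers no proof of this theorem: it is imported verbatim from \cite{BP} (Theorem 5.5.4 there), so there is no in-paper argument to compare against line by line. That said, your strategy is the correct one and coincides with the structure of the cited proof: well-definedness of $\bbPhi$ by checking that $\Phi$ carries \(d) and \(n) to consequences of 1/2-handle trading and blow-down (consistent with Figure \ref{phi/fig} and Remark \ref{nondegeneracy/rem}), fullness and essential surjectivity inherited from the equivalence $\Phi$ since both quotient functors are bijective on objects and surjective on morphisms, and faithfulness by lifting each generating Kirby move to an identity provable in $\Hbb^r$. Your identification of the locality problem --- a Kirby move occurs inside an arbitrary tangle, so one must push arbitrary sub-diagrams through the traded 1-handle, which is exactly the intertwining property \(q14) of the adjoint morphisms of Section \ref{adjoint/sec} --- is the genuine crux, and it matches the paper's own commentary on why \(r8) and \(r9) are needed. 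Be aware, however, that what you have written is a strategy rather than a proof: the decisive step, namely the explicit diagrammatic derivation from \(d), \(n) and the axioms of $\H^r$ of the $n$-strand version of 1/2-handle trading and of the blow-down with its framing correction, is precisely the content you defer back to \cite{BP}, so nothing beyond the cited source is actually supplied. One small imprecision: faithfulness does not require ``the Kirby theorem'' as such, only the definition of $\Cob^{2+1}$ as the quotient of $\Chb^{3+1}$ by the listed moves; Kirby's theorem is what guarantees that this quotient captures diffeomorphism of boundaries, which is not needed for the equivalence of the two quotients.
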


Theorem \ref{alg3cob/thm} implies  that any $2+1$-dimensional TQFT is generated by a (categorical) selfdual ribbon Hopf algebra and also that any TQFT of 4-dimensional 2-handlebodies, generated  by  such algebra won't detect more then the boundaries of those handlebodies and, in the case in which the normalization condition \(n) is not satisfied, their signature. 


\subsection{The universal category $\alg$}

We now introduce the category $\alg$ as it is defined in \cite{AS}. 

\begin{definition}\label{alg/defn}
$\alg$ is the universal braided strict monoidal category, generated by a Hopf algebra $H$, equipped with \textit{ribbon elements}  $w_+: \one\to H$, $w_- : \one\to H$,  \textit{copairing} $\sigma : \one \to H \diam H$ and  \textit{pairing} $\bar\sigma :  H \diam H \to\one$, such that:
\vskip-8pt
$$m \circ (w_+ \diam \id_H) = m \circ ( \id_H \diam  w_+)\,,
\eqno{({\mathtt h1})}$$
$$m\circ(w_+\diam w_-)=\eta\,,
\eqno{({\mathtt h2})}$$
$$\epsilon \circ w_+ = \one,
\eqno{({\mathtt h3})}$$
$$S \circ w_+ = w_+,
\eqno{({\mathtt h4})}$$
$$\Delta\circ  w_+=(m\diam m)\circ (w_+ \diam \sigma\diam w_+),
\eqno{({\mathtt h5})}$$
$$(\epsilon\diam \id_H)\circ \sigma=\eta=(\id_H\diam\epsilon)\circ \sigma,
\eqno{({\mathtt h6-6'})}$$
$$( \id_H\diam \Delta) \circ \sigma = (m\diam\id_H \diam \id_H) \circ (\id_H \diam \sigma \diam \id_H) \circ \sigma,
\eqno{({\mathtt h7})}$$
$$(\Delta \diam \id_H) \circ \sigma = (\id_H \diam \id_H \diam 
m) \circ (\id_H \diam \sigma \diam \id_H) \circ \sigma,
\eqno{({\mathtt h7'})}$$
$$(\id_H\diam \bar\sigma)\circ (\sigma\diam\id_H)=\id_H=(\bar\sigma\diam\id_H)\circ(\id_H\diam \sigma),
\eqno{({\mathtt h8-8'})}$$
$$m\circ(w_+\diam w_+)=m\circ \sigma,
\eqno{({\mathtt h9})}$$
$$(\alpha_1\diam\id_H)\circ (\id_H\diam \gamma)\circ(\Delta\diam\id_H)=\overline{\gamma}\circ (\id_H\diam \alpha_1)\circ(\Delta \diam \id_H),
\eqno{({\mathtt h10})}$$
$$d\circ(m\diam\id_H)\circ(w_+\diam w_+\diam w_+)=\one,
\eqno{({\mathtt h11})}$$
$$ S^2=(\id_H\diam \bar\sigma)\circ(\gamma\circ\id_H)\circ(\id_H\diam \sigma),
\eqno{({\mathtt h12})}$$
\end{definition}

The elementary morphisms and defining relations of $\alg$ are presented in Table \ref{table-Alg-axioms/fig}, where the notations adopted for all elementary morphisms with the exception of the ribbon elements, is the same as the one used for the analogous morphisms of $H^r$  in Tables \ref{table-Hr-axioms/fig} and  \ref{table-Hrb/fig}. This won't cause a confusion since, as we will see bellow, the functor from $\alg$ to $\Hbb^r$ sends these morphisms  to the corresponding ones in $\Hbb^r$. On the other hand, for the ribbon element of $\alg$ and its inverse  (which do not appear as elementary morphism of $\Hbb^r$) we have kept  the notation used in \cite{AS}.  

\smallskip
We observe that $\Hbb^r$  and $\alg$ present different choices for the set of ribbon axioms. Moreover, in the definition of $\alg$ the integral morphisms  and the normalization axiom \(n) are missing. 

\begin{table}[H]
\centering \fig{}{table-Alg-axioms.eps}
\caption{}
\label{table-Alg-axioms/fig}
\end{table}


\section{The functor $\Gamma:\alg\to \overline{\overline{\H}}$$^r$}\label{Gamma/sec}

In this section we define the functor from $\alg$ to $\Hbb^r$ and present an alternative set of axioms for $\Hbb^r$.

\medskip
The proofs in the section will consist in showing that some morphisms in the universal algebra $\Hbb^r$ are equivalent, meaning that the graph diagram of one of them can be obtained from the graph diagram of the other by applying a sequence of the algebra's axioms and  properties. We will outline the main steps in this procedure by drawing in sequence some intermediate diagrams, and for each step we will indicate in the corresponding order, the main moves from Tables \ref{table-Hr-axioms/fig}, \ref{table-Hu-prop/fig}, \ref{table-Hr-prop/fig}, \ref{table-adjointdefn/fig} and  \ref{table-Hrb/fig} needed to transform the diagram on the left into the one on the right.  Notice, that the moves represent equivalences of diagrams and we will use the same notation for them and their inverses. On the other hand, we recall that the use of the isotopy moves \(f3-11) and of the duality of the univalent vertices in Table \ref{table-Hu-prop/fig} will be implicit.

\medskip
In order to prove that there exist a functor $\alg\to\Hbb^r$, we need the following lemma, which shows that  relations  analogous to  $({\mathtt h5})$ and $({\mathtt h10})$ in Definition \ref{alg/defn} of $\alg$  are satisfied in $\H^r$, i.e the proof of these relations doesn't require that the copairing is non degenerate.

\begin{lemma} \label{gamma1/lemma} The following relations are satisfied in $\H^r$:
$$\Delta \circ v^{-1} \circ \eta=(v^{-1} \diam v^{-1}) \circ\sigma \, \eqno{\(h5)}$$
$$\alpha_1\circ (\id_H\diam \gamma)\circ(\Delta\diam\id_H)=\overline{\gamma}\circ (\id_H\diam \alpha_1)\circ(\Delta \diam \id_H)\, \eqno{\(h10)}$$
\end{lemma}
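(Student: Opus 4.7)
The plan is to handle \(h5) and \(h10) separately; the first is a short formal consequence of the ribbon axiom \(r8), while the second is the technically substantial step.

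For \(h5) the idea is to compose axiom \(r8),
$\Delta \circ v^{-1} = (v^{-1} \diam v^{-1}) \circ \mu \circ \bar\gamma \circ \Delta$,
on the right with $\eta$. By \(a7) this replaces $\Delta \circ \eta$ with $\eta \diam \eta$, and naturality of the braiding on the unit object yields $\bar\gamma \circ (\eta \diam \eta) = \eta \diam \eta$. Unfolding $\mu = (m \diam m) \circ (\id \diam \sigma \diam \id)$ and applying the unit axioms \(a4-4') twice then reduces $\mu \circ (\eta \diam \eta)$ to $\sigma$, which is exactly the claimed identity. No ribbon axiom beyond \(r8) is needed.

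For \(h10) the plan is to exploit the universal intertwining property \(q14) (Lemma 4.4.10 of \cite{BP}) together with \(q7'), the reformulation of \(r8) in terms of the adjoint morphism that appears in Table \ref{table-adjointdefn/fig}. First I would use naturality of the braiding $\gamma$ against $\alpha_1 \diam \id_H$ to rewrite the left-hand side of \(h10) as $(\id \diam \alpha_1) \circ \gamma_{H \diam H, H} \circ (\id \diam \gamma) \circ (\Delta \diam \id)$, and then decompose $\gamma_{H \diam H, H}$ into elementary braidings via the hexagon identity. The resulting expression differs from the desired $(\id \diam \alpha_1) \circ (\Delta \diam \id)$ by a "braiding word" which I plan to absorb into the coproduct using \(q7'); \(q14) applied to $\Delta$ (giving $\Delta \circ \alpha_1 = \alpha_2 \circ (\id \diam \Delta)$) is what allows the remaining $\alpha_1$ and $\Delta$ to be commuted past each other, and a final $\gamma \circ \bar\gamma = \id$ cancellation recovers the right-hand side.

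The main obstacle will be \(h10): this identity fails in any symmetric monoidal category unless $H$ is cocommutative, so it cannot hold for general categorical Hopf algebras and the braiding of $\H^r$ must interact in a very specific way with the non-cocommutative coproduct to make it true. Concretely, most of the work consists in carefully tracking the nested braidings produced by repeated applications of the hexagon and identifying the exact combination of \(q14) and \(q7') that allows them to collapse. In contrast, \(h5) is essentially a few lines of straightforward unfolding once \(r8) is in hand.
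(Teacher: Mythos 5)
Your treatment of \(h5) is correct and complete: precomposing \(r8) with $\eta$, applying \(a7), using naturality of the braiding on $\eta\diam\eta$, and collapsing $\mu\circ(\eta\diam\eta)$ to $\sigma$ via the unit axioms \(a4-4') is exactly the intended short computation, and it indeed needs nothing beyond \(r8).

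The \(h10) half, however, contains a genuine gap. After your naturality step (which, as written, also drops the $\bar\gamma$ that must remain in front for the two sides to be comparable), the whole lemma reduces to showing $(\id\diam\alpha_1)\circ\gamma_{H\diam H,H}\circ(\id\diam\gamma)\circ(\Delta\diam\id)=(\id\diam\alpha_1)\circ(\Delta\diam\id)$ (with the evident $\diam\,\id$ paddings), i.e.\ that the adjoint action is braided-cocommutative against the coproduct. Your plan at this point is to ``absorb the braiding word into the coproduct using \(q7')'', but that absorption \emph{is} the content of the identity, and you assert it rather than carry it out; your closing paragraph concedes that the collapsing of the nested braidings remains to be identified. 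Note that the paper's proof is a direct diagrammatic computation from \(q7') \emph{and} \(q8), the adjoint-morphism reformulations of \(r8) and \(r9), and it records explicitly that both are needed. Your alternative crutch, property \(q14), is legitimate to cite (it is Lemma 4.4.10 of \cite{BP} and holds in $\H^r$, so there is no circularity), but it is a much heavier black box whose proof already rests on both \(r8) and \(r9); invoking it does not let you avoid \(r9), it only hides the dependence. To close the argument you must exhibit the actual sequence of moves — in the paper this is the chain of applications of \(q7') and \(q8) in the corresponding figure — that makes the residual braiding word cancel.
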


\begin{proof} The proofs are presented in Figures \ref{proof-h5/fig} and \ref{proof-h10/fig}. Observe that the proof of \(h10) uses relations \(q7') and \(q8) in Table \ref{table-adjointdefn/fig}, equivalent to axioms \(r8) and \(r9).
\end{proof}

\begin{figure}[htb]
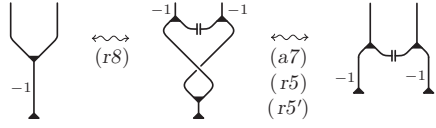

\centering \fig{}{proof-h5.eps}
\caption{Proof of {\sl (h5)}.}
\label{proof-h5/fig}
\vskip-6pt
\end{figure}

\begin{figure}[htt]
\centering \fig{}{proof-h10.eps}
\caption{Proof of  {\sl (h10)}.}
\label{proof-h10/fig}
\vskip-6pt
\end{figure}


\begin{theorem} \label{functor-Hab/thm}
There exists a braided monoidal  functor $\Gamma: \alg\to \Hbb^r$ which preserves the Hopf algebra structure and sends the pairing and the copairing in $\alg$ to the corresponding ones in $\H^r$. Moreover, (see Figure \ref{defn-gamma/fig}):
$$ \Gamma(w_+)=v^{-1}\circ\eta,\qquad \Gamma(w_-)=v\circ\eta. $$
\end{theorem}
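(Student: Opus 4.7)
Since $\alg$ is the braided monoidal category freely generated by the Hopf-algebra data of Definition \ref{alg/defn} modulo the twelve relations $(\mathtt{h1})$--$(\mathtt{h12})$, the prescribed assignments determine a well-defined braided monoidal functor as soon as each of these relations is checked to hold in $\Hbb^r$ under the assignment. I take the Hopf algebra of $\Hbb^r$ to play the role of $H$, the copairing $\sigma$ to play itself, the pairing $\bar\sigma=(\lambda\diam\lambda)\circ(\id\diam\sigma\diam S)$ of Remark \ref{nondegeneracy/rem} to play the role of $\bar\sigma$, and $\Gamma(w_+)=v^{-1}\circ\eta$, $\Gamma(w_-)=v\circ\eta$. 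The entire proof then consists in verifying $(\mathtt{h1})$--$(\mathtt{h12})$ one by one under this assignment, after which the universal property of $\alg$ yields $\Gamma$.

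The relations not involving $w_\pm$ reduce immediately to existing structure of $\Hbb^r$: $(\mathtt{h6}$--$\mathtt{h6'})$ and $(\mathtt{h7'})$ are copairing axioms of Table \ref{table-Hr-axioms/fig}; $(\mathtt{h7})$ is obtained from $(\mathtt{h7'})$ by braiding together with the symmetry properties of $\sigma$ collected in Table \ref{table-Hr-prop/fig}; the nondegeneracy relations $(\mathtt{h8}$--$\mathtt{h8'})$ are precisely $(d2$--$d2')$ in Table \ref{table-Hrb/fig}; and $(\mathtt{h12})$ is the standard expression of $S^2$ as conjugation by the Drinfeld element, which follows by applying the nondegenerate pairing $\bar\sigma$ to the defining formula of $\rho_l$ and then invoking the antipode axioms $(s1$--$s1')$.

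Next I would treat the relations that mention $w_\pm$; each collapses, by a single application of $(r5)$ (and its right-hand mirror from Table \ref{table-Hr-prop/fig}), to an identity about $\eta$. Explicitly: $(\mathtt{h1})$ is the centrality of $v^{\pm1}\circ\eta$, consequence of $(r5)$ and $(a4$--$4')$; $(\mathtt{h2})$ reduces to $m\circ(\eta\diam\eta)=\eta$ after sliding $v^{-1}$ and $v$ through the multiplication; $(\mathtt{h3})$ and $(\mathtt{h4})$ reduce respectively to $\epsilon\circ\eta=\id_\one$ and $S\circ\eta=\eta$ via $(r3)$, $(r4)$, $(a8)$; and in $(\mathtt{h9})$ both sides are computed to equal $v^{-2}\circ\eta$, the right-hand side requiring $(r6)$ to unfold $\sigma$ and then $(s1')$, $(r4)$, $(a8)$ to collapse the resulting diagram. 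The two substantial cases are $(\mathtt{h5})$ and $(\mathtt{h10})$, which are exactly the content of Lemma \ref{gamma1/lemma}, and where the full strength of $(r8)$ and $(r9)$ (through $(q7')$--$(q8)$) is consumed.

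The main obstacle is the scalar normalization $(\mathtt{h11})$, which under $\Gamma$ becomes a diagram with three copies of $v^{-1}\circ\eta$ fed through a multiplication into the composite pairing $(\lambda\diam\lambda)\circ(\id\diam\sigma\diam S)$. The plan for this step is first to use $(r5)$ and its mirror to collect the three factors of $v^{-1}$ outside the multiplication, then to apply $(h5)$ in reverse to trade the resulting occurrence of $\sigma$ for a single coproduct acting on $v^{-1}\circ\eta$, and finally to exploit the integral identities $(i3)$--$(i5)$ together with the normalization axiom $(n)$ of $\Hbb^r$ (which is exactly the missing ingredient not present in $\alg$) to reduce the whole scalar to $\id_\one$. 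Once $(\mathtt{h11})$ is verified, all twelve defining relations of $\alg$ hold under the proposed assignment, so the universal property of $\alg$ produces the desired braided monoidal functor $\Gamma:\alg\to\Hbb^r$ with the stated action on generators.
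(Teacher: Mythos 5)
Your proposal follows exactly the paper's route: define $\Gamma$ on generators, verify the twelve relations $({\mathtt h1})$--$({\mathtt h12})$ one by one in $\Hbb^r$, dispatch $({\mathtt h1})$--$({\mathtt h4})$ and $({\mathtt h6})$--$({\mathtt h8'})$ by the same citations of \(r1-5), \(a8), \(s6), \(p2-2'), \(r7-7') and \(d2-2'), delegate $({\mathtt h5})$ and $({\mathtt h10})$ to Lemma \ref{gamma1/lemma}, and reserve genuine computation for $({\mathtt h9})$, $({\mathtt h11})$ and $({\mathtt h12})$ (which the paper carries out diagrammatically in Figures \ref{proof-h9/fig}--\ref{proof-h12/fig}); your explicit reduction of $({\mathtt h9})$ to $v^{-2}\circ\eta$ and your identification of the normalization axiom \(n) as the key input for $({\mathtt h11})$ match the paper's treatment. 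The only difference is one of completeness rather than method: your $({\mathtt h11})$ and $({\mathtt h12})$ verifications are sketched plans where the paper gives full graphical derivations.
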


\begin{figure}[htb]
\centering \fig{}{defn-gamma.eps}
\caption{Images under  $\Gamma: \alg\to$$\overline{\overline{\H}}$$^r$ of the ribbon element and its inverse. }
\label{defn-gamma/fig}
\vskip-6pt
\end{figure}

\begin{proof}
It is enough to show that the defining relations of $\alg$ in Table \ref{table-Alg-axioms/fig} are satisfied in the image of $\Gamma$. The braid, the bialgebra and the antipode axioms  of $\alg$ are the same as the ones of $\Hbb^r$, so we only need to check the ribbon axioms. Most of them coincide or follow directly from axioms or properties of $\H^r$:

\smallskip
\(h1) follows from \(r5-5') in Tables \ref{table-Hr-axioms/fig} and  \ref{table-Hr-prop/fig};

\(h2)  follows from \(r1-2-5-5') in Tables \ref{table-Hr-axioms/fig} and \ref{table-Hr-prop/fig};

\(h3) follows from \(a8) and \(r4) in Table \ref{table-Hr-axioms/fig};

\(h4) follows from \(r3) and \(s6) in Tables \ref{table-Hr-axioms/fig} and \ref{table-Hu-prop/fig};

\(h5) and \(h10) follow from Proposition \ref{gamma1/lemma};

\(h6-6') coincide with \(p2-2');

\(h7-7') coincide with \(r7-7') in Tables \ref{table-Hr-axioms/fig} and \ref{table-Hr-prop/fig};

\(h8-8') coincide with \(d2-2') in \ref{table-Hrb/fig};

\smallskip
\noindent The proofs of the remaining relations \(h9), \(h11) and \(h12) are presented in Figures  \ref{proof-h9/fig},  \ref{proof-h11/fig}  and \ref{proof-h12/fig}. 
\end{proof}

\begin{figure}[H]
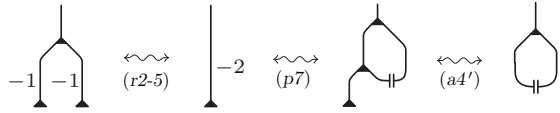

\centering \fig{}{proof-h9.eps}
\caption{Proof of {\sl (h9)}.}
\label{proof-h9/fig}
\vskip-6pt
\end{figure}

\begin{figure}[H]
\centering \fig{}{proof-h11.eps}
\caption{Proof of relation {\sl (h11)}.}
\label{proof-h11/fig}
\vskip-6pt
\end{figure}

\begin{figure}[H]
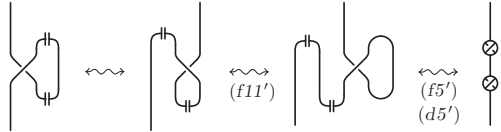

\centering \fig{}{proof-h12.eps}
\caption{Proof of relation {\sl (h12)}.}
\label{proof-h12/fig}
\vskip-6pt
\end{figure}

We observe that the requirement that the Hopf algebra is selfdual is a strong one and it is legitimate to ask if in $\Hbb^r$, axioms \(r8-9) are still independent from the rest or if they can be presented in simpler form.  It seems to us that the independence still holds, but the next proposition, the second part of which was communicated to the author by Stamatova, shows that indeed, in $\Hbb^r$ \(r9) is equivalent to \(h10) and \(r8) is equivalent to the requirement that the copairing intertwines with the adjoint action  (see Figure \ref{relations-qh/fig}). 


\begin{figure}[htb]
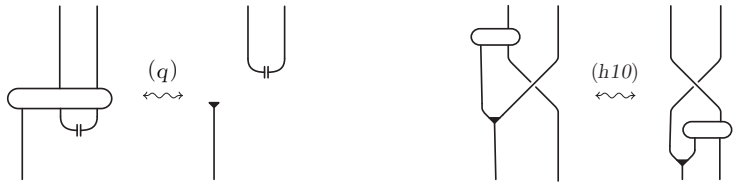

\centering \fig{}{relations-qh.eps}
\caption{Relations equivalent to $(r8)$ and $(r9)$ in $\overline{\overline{\H}}$$^r$.}
\label{relations-qh/fig}
\end{figure}

\begin{proposition}\label{new-axiomsHbb/thm} In $\Hbb^{r}$, modulo the rest of its axioms
\begin{itemize} 
\item[(a)] axiom  \(r8) is equivalent to relation \(q);
  
\item[(b)] {\rm [Stamatova]}  axiom \(r9) is equivalent to relation \(h10).
  \end{itemize}
  
  \medskip
Therefore $\Hbb^{r}$ is equivalent to the universal algebraic category, generated by the elementary morphisms and axioms in Tables \ref{table-Hr-axioms/fig} and \ref{table-Hrb/fig}  where axioms \(r8) and \(r9) are replaced by relations \(q) and \(h10).
\end{proposition}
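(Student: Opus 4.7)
The plan is to prove (a) and (b) each by a pair of implications, leveraging that in both cases one direction follows essentially from the machinery already in place, while the reverse directions exploit the nondegeneracy of the copairing that distinguishes $\Hbb^r$ from $\H^r$. A crucial preliminary observation is that \(r8) and \(r9) are known from \cite[Prop.~4.4.5]{BP} to be equivalent, modulo the remaining axioms of $\H^r$, to the adjoint-form reformulations \(q7') and \(q8) respectively, so it is enough to prove \(q)$\iff$\(q7') and \(h10)$\iff$\(q8) inside $\Hbb^r$.

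For the forward directions, in (a) I would apply the universal intertwining property \(q14) to the morphism $F=\sigma:\one\to H\diam H$, with the convention $\alpha_0=\epsilon$; since \(q14) is already established in $\H^r$ under \(r8)--\(r9), this immediately yields \(q). For (b), the forward direction is the statement of Lemma \ref{gamma1/lemma}(h10), whose diagrammatic proof in $\H^r$ transfers verbatim to $\Hbb^r$.

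For the reverse directions, the essential additional tool is the nondegenerate pairing $\bar\sigma$ guaranteed by axiom \(d) and properties \(d2-2')--\(d3-3'). The strategy is to use $\bar\sigma$ as a bending operation, converting an incoming copairing strand into an outgoing pairing strand and vice versa, thereby turning statements involving $\sigma$ into statements involving $\Delta$ and $\gamma$. For (a), starting from \(q) I would bend the lower copairing strand through a $(\sigma,\bar\sigma)$ cap to expose the coproduct hidden in $\sigma$, then use \(r6) to rewrite the configuration in terms of $\Delta$, $v$, $S$ and $\eta$; after diagram manipulation this should coincide with \(q7'), whence \(r8). For (b), starting from \(h10) I would cap off the free identity strand with a $(\sigma,\bar\sigma)$ pair arranged so that, via \(h7) and the duality of univalent vertices, the comultiplication on the left-hand side is converted into the multiplication featured in \(q8), while the right-hand side reassembles into the braiding form required by \(q8).

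The principal obstacle is the diagrammatic bookkeeping in the reverse direction of (b): one must exchange the roles of comultiplication and braiding through several coordinated applications of the nondegeneracy relations, the adjoint-morphism properties listed in Table \ref{table-adjointdefn/fig}, and the ribbon identities from Table \ref{table-Hr-prop/fig}, all while keeping track of the insertions of $v$ and $v^{-1}$ produced when $\sigma$ is unfolded via \(r6). Once both equivalences are established, the final ``therefore'' clause is immediate: the axiom system obtained by swapping \(r8), \(r9) for \(q), \(h10) imposes the same equivalence relation on the free braided monoidal category generated by the data of $\Hbb^r$, so the two universal categories coincide.
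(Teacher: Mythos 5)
Your proposal follows essentially the same route as the paper: it reduces \(r8) and \(r9) to their adjoint-form reformulations \(q7') and \(q8) via \cite[Prop.~4.4.5]{BP}, obtains the forward implications from the intertwining property \(q14) and Lemma \ref{gamma1/lemma}, and derives the reverse implications by diagram manipulation in $\Hbb^r$ using the nondegenerate pairing $\bar\sigma$ coming from axiom \(d) --- which is exactly what the paper does in Figures \ref{proof-ir8/fig} and \ref{proof-hr9/fig}. The only caveat is that the reverse-direction diagram chases, which constitute the real content of the paper's proof, are left as a sketch in your write-up rather than carried out explicitly.
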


\begin{proof}
We remind that according to Lemma \ref{gamma1/lemma} and property \(q14) in Table \ref{table-adjointdefn/fig}, both relations \(q) and \(h12) hold in $\Hbb^r$. Moreover, modulo the other axioms of $\Hbb^{r}$, \(r8) is equivalent to \(q7') and \(r9) is equivalent to \(q8)  in Table \ref{table-adjointdefn/fig}. Therefore, it is enough to show that \(q7') follows from \(q) and the rest of the axioms of $\Hbb^r$, excluded \(r8), and  that \(q8) follows from \(h12) and the rest of the axioms of $\Hbb^r$, excluded \(r9). This is done in Figures \ref{proof-ir8/fig} and \ref{proof-hr9/fig}.
\end{proof}

\begin{figure}[htb]
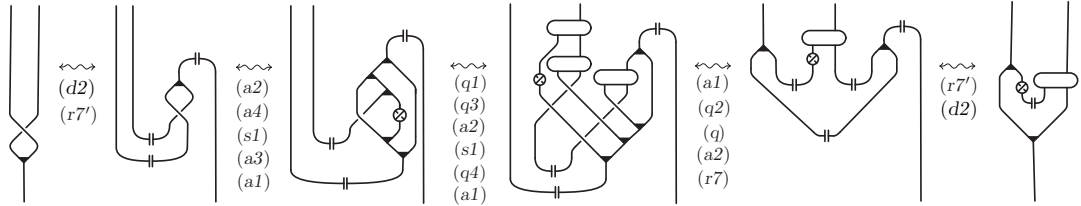

\centering \fig{}{proof-qr8.eps}
\caption{Proof that in $\overline{\overline{\H}}$$^r$ $(q7')$ follows from $(q)$.}
\label{proof-ir8/fig}
\end{figure}

\begin{figure}[htb]
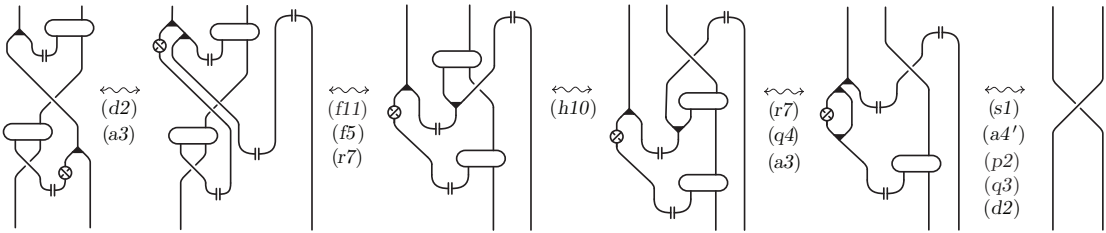

\centering \fig{}{proof-hr9.eps}
\caption{Proof that in $\overline{\overline{\H}}$$^r$ $(q8)$  follows from $(h10)$.}
\label{proof-hr9/fig}
\end{figure}

\end{document}